\documentclass[indent=latex,review=false,theme=default,marginsize=3cm,fontsize=12pt]{acmhomework}

\title{Pushforwards in Inverse Homotopical Diagrams}

\newcommand{\hM}{\widehat{M}}

\addbibresource{inv-psfw.bib}

\begin{abstract}
  We establish a sufficient condition for the category of homotopical inverse diagrams to be closed under pushforward inside the category of inverse diagrams in a fibration category.
\end{abstract}

\begin{document}

\author{Krzysztof Kapulkin \and Yufeng Li}

\maketitle

\section*{Introduction}
Developed originally to study generalized sheaf cohomology \cite{brown:aht}, Brown's theory of categories of fibrant objects has seen renewed interest in recent years coming from such disparate areas as: higher category theory \cite{szumilo:agt,szumilo:hha}, dependent type theory, and graph theory.
The structure of a category of fibrant objects seems to be the exact structure possessed by various examples appearing naturally in these contexts, for example, the classifying category of a dependent type theory \cite{avigad-kapulkin-lumsdaine,shu15} and the category of simple graphs with A-weak equivalences \cite{carranza-kapulkin}.

When applying this theory in concrete cases, one often works with categories of diagrams.
Namely, given a category of fibrant objects $\bC$ and a small category $\McI$, one can ask whether the category $\bC^\McI$ of $\McI$-diagrams in $\bC$ is again a category of fibrant objects.
This, of course, requires putting some restrictions on $\McI$ and, possibly, on the kind of diagrams one considers.
The most common of these is the requirement that $\McI$ be an \emph{inverse} category.
In that situation, $\bC^\McI$ is again a category of fibrant objects with fibrations and weak equivalences defined levelwise.
A natural restriction is to \emph{Reedy fibrant} diagrams, which require a compatibility between the inverse structure of $\McI$ and the fibrations of $\bC$.
Such diagram categories were studied in detail by Radulescu-Banu in \cite{radulescu-banu} and in the context of type theory by Shulman in \cite{shu15}.

One can also consider $\McI$ to carry a class of weak equivalences and ask that the diagrams $\McI \to \bC$ under consideration preserve this class, leading to the notion of a \emph{homotopical} diagram.
Such diagrams were used extensively by Szumi{\l}o to establish an equivalence of the homotopy theories of fibration categories and (finitely) complete quasicategories \cite{szumilo:agt}.
In the context of dependent type theory, such diagrams have proven indispensable in several contexts, e.g., to construct path objects on the category of models of dependent type theory \cite{kapulkin-lumsdaine:HToTT} and in the proof of homotopy canonicity by the first-named author and Sattler.

A common requirement in dependent type theory is that the category of fibrant objects also be closed under pushforwards.
Such categories of fibrant objects are presentations of locally cartesian closed $(\infty,1)$-categories.
Combining the two themes discussed above, we arrive at the fundamental question of the present paper: under what conditions is the category of homotopical Reedy fibrant digrams $\McI \to \bC$ again closed under pushforwards inside the category of all (Reedy fibrant) diagrams?

Interestingly, two extreme cases were previously established: Shulman \cite{shu15} showed that if \emph{none} of the maps in $\McI$ are weak equivalences, then pushforwards in $\bC$ give rise to pushforwards in $\bC^\McI$; while in \cite{kl21}, the case of \emph{all} maps being weak equivalences was also resolved in the positive.
By revisiting the proof of \cite{kl21} from the setting of models of dependent type theory, we are able to identify a fairly permissive condition on $\McI$, presented in \cref{thm:htpy-psfw}.

This result has applications in a variety of areas discussed above.
In dependent type theory, it allows for constructions of made-to-order models of type theory, i.e., models satisfying specific conditions on its type of propositions.
It is also a step towards proving that suitably defined locally cartesian closed categories of fibrant objects present the same homotopy theory as locally cartesian closed quasicategories \cite{kapulkin:lccqcat,cisinski:book}.

The failure of closure of homotopical diagrams under pushforwards inside all (Reedy fibrant) diagrams is also of independent interest. In general, homotopical functors can be seen as representing the $(\infty, 1)$-functors between the $(\infty, 1)$-categories presented by the two categories with weak equivalences.
Their failure to be closed under pushforward shows that the $(\infty, 1)$-category of $(\infty, 1)$-functors is not a left exact localization of the $(\infty, 1)$-category of (1-)functors, a question of independent interest.
Several such situations are discussed in \cref{ex:no-htpy-psfw}.

The paper is organized into three short sections: in \cref{sec:inverse}, we review the background on inverse categories; in \cref{sec:pushforward}, we recall and expand on the inductive definition of pushforwards in inverse diagrams; and in \cref{sec:homotopical}, we prove our main result and discuss examples and counter-examples.
In each section, we add new assumptions on the category $\bC$, making sure that they are satisfied by type-theoretic fibration categories \cite{shu15} and, when appropriate, general categories of fibrant objects \cite{brown:aht}.

\section{Inverse Diagrams} \label{sec:inverse}
We recall some preliminaries on inverse diagrams and the inductive procedure in
which one constructs inverse diagrams.

\begin{definition}
  An \emph{inverse category} $\McI$ is a category such that there exists a grading on its objects by a degree function $\deg \colon \ob\McI \to \bN$ such that if $f \colon i \to j \in \McI$ is not an identity map, then $\deg i > \deg j$.

  For each $n \in \bN$, denote by $\McI_{< n}$ the full subcategories of $\McI$ spanned by the objects of degree strictly less than $n$ and by $\partial(i/\McI)$ the full subcategory of $i/\McI$ excluding the identity map.
\end{definition}

\begin{definition}
  Let $\McI$ be a finite inverse category, and $\bC$ be a finitely complete category.
  For each $n \in \bN$, the \emph{coskeleton} functor is defined as the right
  adjoint to the restriction along $\McI_{\leq n} \hookrightarrow \McI$
  \begin{equation*}
    \begin{tikzcd}[cramped]
      {\bC^{\McI_{<n}}} & {\bC^\McI} & \bC
      \arrow[""{name=0, anchor=center, inner sep=0}, "{\cosk_n}", shift left=1.5, from=1-1, to=1-2]
      \arrow[""{name=1, anchor=center, inner sep=0}, "{\res_n}", shift left=1.5, from=1-2, to=1-1]
      \arrow["{\ev_i}", from=1-2, to=1-3]
      \arrow["\dashv"{anchor=center, rotate=90}, draw=none, from=1, to=0]
    \end{tikzcd}
  \end{equation*}
  The \emph{matching object functor} at $i \in \McI$ is the restricted monad
  $M_i \coloneqq \ev_i\relax \cdot \cosk_n\relax \cdot \res_n\relax$ and the
  \emph{matching map $m_i \colon \ev_i\relax \to M_i$} is the unit of the
  adjunction $\res_n \dashv \cosk_n$ restricted along the map evaluating at $i$.
\end{definition}

Explicitly, for each $X \in \bC^\McI$, the matching object is the limit:
\begin{equation*}
  M_iX = \lim(\partial(i/\McI) \to \McI \xrightarrow{X} \bC)
\end{equation*}
and the matching map $m_iX \colon X_i \to M_iX$ is the unique map induced by the
cone $(X_f \colon X_i \to X_j)_{f \colon i \to j \in \partial(i/\McI)}$.

\begin{definition}
  Given a map $f \colon X \to Y \in \bC^\McI$ of diagrams, the \emph{relative
    matching map} at $i \in \McI$ is the comparison map between the pullback as
  follows.
  \begin{equation*}
    \begin{tikzcd}[cramped]
      {Y_i} \\
      & {X_i \times_{M_iX} M_iY} & {M_iY} \\
      & {X_i} & {M_iX}
      \arrow["{\hM_if}"{description}, dashed, from=1-1, to=2-2]
      \arrow["{m_iY}", curve={height=-12pt}, from=1-1, to=2-3]
      \arrow["{f_i}"', curve={height=12pt}, from=1-1, to=3-2]
      \arrow[from=2-2, to=2-3]
      \arrow[from=2-2, to=3-2]
      \arrow["\lrcorner"{anchor=center, pos=0.05}, draw=none, from=2-2, to=3-3]
      \arrow["{M_if}", from=2-3, to=3-3]
      \arrow["{m_iX}"', from=3-2, to=3-3]
    \end{tikzcd}
  \end{equation*}

  If $\bC$ is equipped with a wide subcategory of fibrations
  $\McF \subseteq \bC$, a \emph{Reedy fibration} in $\bC^\McI$ is a map of
  diagrams $p \colon E \twoheadrightarrow B \in \bC^\McI$ where each relative
  matching map
  $\hM_ip \colon E_i \twoheadrightarrow B_i \times_{M_iB} M_iE \in \McF$ is a
  fibration in $\bC$.

  We say a diagram $X \in \bC^\McI$ is \emph{Reedy fibrant} when the map
  $X \to 1 \in \bC^\McI$ is a Reedy fibration.
  This is the same as saying each matching map $m_iX \colon X_i \to M_iX$ is a
  fibration.
\end{definition}

\section{Pushforwards in Inverse Diagrams} \label{sec:pushforward}
We first recall and expand on the inductive procedure for
constructing pushfowards in inverse diagram categories given by
\cite{fkl24}.

For this, we first provide an alternative calculation of the matching object.
\begin{lemma}\label{lem:mat-obj-lim}
  Let $p \colon E \to B \in \bC^\McI$ be a map of inverse diagrams and
  $i \in \McI$.
  The map $M_ip \colon M_iE \to M_iB$, viewed as an object in
  $\sfrac{\bC}{M_iB}$, is the limit of the $\partial(i/\McI)$-shaped diagram $D$
  valued in $\sfrac{\bC}{M_iB}$ taking each
  $f \colon i \to j \in \partial(i/\McI)$ to
  $D_f \coloneqq \proj_f^*E_j \to M_iB \in \sfrac{\bC}{M_iB}$ with action on a
  map $g \colon j \to j'$ defined by universality of the pullback.
  \begin{equation*}
    \begin{tikzcd}[cramped, row sep=small, column sep=small]
      & {M_iE} \\
      {D_f} &&& {E_j} \\
      && {D_{gf}} && {E_{j'}} \\
      &&& {B_j} \\
      & {M_iB} &&& {B_{j'}}
      \arrow[from=1-2, to=2-1]
      \arrow[from=2-1, to=2-4]
      \arrow["{D_g}"{description, pos=0.7}, from=2-1, to=3-3]
      \arrow[from=2-1, to=5-2]
      \arrow["{E_g}", from=2-4, to=3-5]
      \arrow["{p_j}"{pos=0.7}, from=2-4, to=4-4]
      \arrow[from=3-3, to=5-2]
      \arrow["{p_{j'}}", from=3-5, to=5-5]
      \arrow["{B_g}", from=4-4, to=5-5]
      \arrow["{\proj_f}"{description}, from=5-2, to=4-4]
      \arrow["{\proj_{gf}}"', from=5-2, to=5-5]
      \arrow["{\proj_{gf}}"'{pos=0.6}, crossing over, from=1-2, to=3-5]
      \arrow["{\proj_f}", from=1-2, to=2-4]
      \arrow[crossing over, from=1-2, to=3-3]
      \arrow["{M_ip}"{description, pos=0.7}, crossing over, curve={height=6pt}, from=1-2, to=5-2]
      \arrow[crossing over, from=3-3, to=3-5]
    \end{tikzcd}
  \end{equation*}
\end{lemma}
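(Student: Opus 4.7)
The plan is to verify the universal property of the claimed limit directly, reducing it to the universal properties of the matching objects $M_iE$ and $M_iB$ and of each pullback $D_f = \proj_f^* E_j$. This is a purely 1-categorical diagram chase and no fibration-theoretic input is required; the only reason the argument is nontrivial is that limits in the slice $\sfrac{\bC}{M_iB}$ cannot in general be computed as limits in $\bC$ when the shape $\partial(i/\McI)$ is disconnected, so the comparison cone must be produced by hand.

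First I construct the comparison cone. For each non-identity $f \colon i \to j$, the naturality of the matching-object construction in $p$ yields $p_j \circ \proj_f = \proj_f \circ M_ip$, and the universal property of the pullback $D_f$ then produces a canonical arrow $M_iE \to D_f$ over $M_iB$ whose composite with $D_f \to E_j$ is the projection $\proj_f \colon M_iE \to E_j$. Compatibility with the transition maps $D_g$ is automatic from the uniqueness half of the pullback universal property: both $D_g \circ (M_iE \to D_f)$ and $M_iE \to D_{gf}$ live over $M_iB$ and project to $E_g \circ \proj_f = \proj_{gf}$ in $E_{j'}$.

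For universality, suppose $t \colon T \to M_iB$ is equipped with a cone $(t_f \colon T \to D_f)_f$ over $M_iB$ compatible with the $D_g$'s. Composing each $t_f$ with $D_f \to E_j$ yields a cone on $j \mapsto E_j$ in $\bC$, whose compatibility with the $E_g$'s follows from the defining commutativity of each $D_g$. The universal property of $M_iE$ then produces a unique $\hat t \colon T \to M_iE$ whose composite with $\proj_f$ is $t_f$ followed by $D_f \to E_j$. To see that $\hat t$ sits over $M_iB$ via $M_ip$, note that $M_ip \circ \hat t$ and $t$ induce the same cone on $j \mapsto B_j$ after post-composing with every $\proj_f$, hence coincide by the universal property of $M_iB$.

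The only genuine obstacle is bookkeeping: one must carefully distinguish morphisms in $\bC$ from those in $\sfrac{\bC}{M_iB}$, and verify that the action of $D_g$, which is defined purely by pullback universality, restricts to $E_g$ on the fiber component and to $B_g$ on the base. Once this unraveling is done, both halves of the universal property are short consequences of the universal properties already in play, and no further assumptions on $\bC$ beyond finite completeness are needed.
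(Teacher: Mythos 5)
Your proposal is correct and follows essentially the same route as the paper: unwind a cone over $D$ in $\sfrac{\bC}{M_iB}$ into a map to $M_iB$ together with a compatible family of maps to the $E_j$'s, then invoke the universal property of $M_iE$ as the limit over $\partial(i/\McI)$. The paper's proof is terser (it cites functoriality of the weighted limit for the comparison cone rather than building it by hand from the pullback universal properties), but the content is identical.
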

\begin{proof}
  Functoriality of the weighted limit defines a cone
  $(M_iE \to D_f \in \sfrac{\bC}{M_iB})_{\id\relax \neq f \in i/\McI}$.
  To check universality of this cone, take another cone
  $(X \to D_f \in \sfrac{\bC}{M_iB})_{\id\relax \neq f \in i/\McI}$.
  Such a cone is exactly a map $X \to M_iB$ along with a family of maps
  $(k_f \colon X \to E_j)_{\id\relax \neq f \in i/\McI}$ such that for each
  $\id \neq f \colon i \to j$, one has $\proj_f \cdot x = p_j \cdot k_f$, and
  for each $g \colon j \to j'$ under $i$, one has $E_g \cdot k_f = k_{gf}$.
  By the universality of the weighted limit, this induces uniquely a map
  $X \to M_iE$ factoring $x \colon X \to M_iB$.
\end{proof}

Then, the inductive procedure for constructing pushforwards in inverse diagram
categories from \cite{fkl24} can be rephrased as follows.
\begin{lemma}[{\cite[Corollary 5.6]{fkl24}}]\label{lem:inv-psfw}
  Fix $\McI$ an inverse category and a finitely completely category $\bC$.
  Let there be a map of diagrams $p \colon B \to A \in \bC^\McI$ along with an
  object $k \colon C \to B \in \sfrac{\bC^\McI}{B}$.
  Assume that for each $i \in \McI$
  \begin{itemize}
    \item the pushforward of $C_i \to B_i \in \sfrac{\bC}{B_i}$ along
    $p_i \colon B_i \to A_i \in \bC$ exists; and
    \item the pushforward of $M_iC \to M_iB \in \sfrac{\bC}{M_iB}$ along
    $M_ip \colon M_iB \to M_iA \in \bC$ exists.
  \end{itemize}

  Then, the pushforward $p_*C$ exists.
  The component of this pushforward at each $i \in \McI$ is equipped with a
  projection map $\kappa_i \colon (p_*C)_i \to (p_i)_*C_i$ and obtained as the
  following pullback over $A_i$ in which the bottom and right maps are induced
  uniquely by the universality of the limit indexed by the strictly
  degree-decreasing decreasing maps $f \colon i \to j$
  \begin{equation*}
    \begin{tikzcd}[cramped, column sep=small]
      {(p_*C)_i} && {(p_i)_*C_i} \\
      {\displaystyle{\lim_{\id\relax \neq f \colon i \to j}^{\bC/A_i}(A_f^* (p_*C)_j)}} & {} & {\displaystyle{\lim_{\id\relax \neq f \colon i \to j}^{\bC/A_i}{((p_i)_*B_f^*C_j)}}} \\
      {A_f^*(p_*C)_j} & {A_f^*(p_j)_*C_j} & {(p_i)_*B_f^*C_j}
      \arrow["{\kappa_i}", from=1-1, to=1-3]
      \arrow[from=1-1, to=2-1]
      \arrow["\lrcorner"{anchor=center, pos=0}, draw=none, from=1-1, to=2-2]
      \arrow["{((p_*k)_i, (p_*C)_f)}"', curve={height=80pt}, from=1-1, to=3-1]
      \arrow[from=1-3, to=2-3]
      \arrow["{(p_i)_*(k_i,C_f)}", curve={height=-80pt}, from=1-3, to=3-3]
      \arrow[from=2-1, to=2-3]
      \arrow["{\proj_f}", from=2-1, to=3-1]
      \arrow["{\proj_f}"', from=2-3, to=3-3]
      \arrow["{A_f^*\kappa_j}"', from=3-1, to=3-2]
      \arrow["{(B_f^*\ev)^\dagger}"', from=3-2, to=3-3]
    \end{tikzcd}
  \end{equation*}
  and the maps $(B_f^*\ev)^\dagger$ and $(p_i)_*(k_i,C_f)$ are respectively
  the comparison maps induced by the universality of the pushforward.
  \begin{equation*}
    \begin{tikzcd}[cramped, row sep=small]
      {C_i} &&& {(p_i)_*C_i} \\
      {B_f^*C_j} && {A_f^*(p_j)_*C_j} & {(p_i)_*B_f^*C_j} \\
      & {C_j} && {(p_j)_*C_j} \\
      {B_i} && {A_i} \\
      & {B_j} && {A_j}
      \arrow["{(k_i,C_f)}"', from=1-1, to=2-1]
      \arrow[from=1-1, to=3-2]
      \arrow["{(p_i)_*(k_i,C_f)}", from=1-4, to=2-4]
      \arrow[from=2-1, to=3-2]
      \arrow[from=2-1, to=4-1]
      \arrow["{(B_f^*\ev)^\dagger}", from=2-3, to=2-4]
      \arrow[from=2-3, to=4-3]
      \arrow[from=2-4, to=4-3]
      \arrow[from=3-4, to=5-4]
      \arrow["{p_i}"{pos=0.7}, from=4-1, to=4-3]
      \arrow["{B_f}"', from=4-1, to=5-2]
      \arrow["{A_f}", from=4-3, to=5-4]
      \arrow["{p_j}"', from=5-2, to=5-4]
      \arrow[from=2-3, to=3-4, crossing over]
      \arrow[from=3-2, to=5-2, crossing over]
    \end{tikzcd}
  \end{equation*}
\end{lemma}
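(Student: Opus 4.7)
The plan is to proceed by induction on $n = \deg(i)$, constructing the components $(p_*C)_i$, the projection $\kappa_i$, and the transitions $(p_*C)_f$ stage by stage, verifying at each step the pushforward universal property of $p_*C$ restricted to $\McI_{<n+1}$. The base case is immediate: for minimal $i$, the indexing category $\partial(i/\McI)$ is empty, so both bottom-row limits collapse to $A_i$, the pullback reduces to $(p_*C)_i = (p_i)_*C_i$ with $\kappa_i = \id$, and the universal property is exactly the assumed $\bC$-adjunction $p_i^* \dashv (p_i)_*$.

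For the inductive step at $i$ of degree $n > 0$, I would first assemble the pullback diagram. The bottom-left vertex $\lim_f A_f^*(p_*C)_j$ is formed using the transitions inherited from the inductive hypothesis and, by \cref{lem:mat-obj-lim} applied to $p_*C \to A$ restricted to $\McI_{<n}$, is identified with $M_i(p_*C) \to M_iA$ pulled back to $A_i$. The bottom-right vertex is the limit of the objects $(p_i)_* B_f^* C_j$, whose existence follows from the assumed existence of $(M_ip)_* M_iC$ together with \cref{lem:mat-obj-lim} applied to $C \to B$, since right adjoints preserve limits. The top arrow $(p_i)_*(k_i, C_f)$ is $(p_i)_*$ of the canonical comparison $C_i \to B_f^*C_j$ induced by $k_i$ and $C_f$; the comparison $(B_f^*\ev)^\dagger$ is the $p_i^* \dashv (p_i)_*$-transpose of $B_f^*$ applied to the counit $p_j^*(p_j)_*C_j \to C_j$, using the canonical iso $p_i^* A_f^* \cong B_f^* p_j^*$ arising from the commuting square $p_j \cdot B_f = A_f \cdot p_i$. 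The pullback of these two arrows over $A_i$ defines $(p_*C)_i$, with $\kappa_i$ and $(p_*C)_f$ as the indicated projections.

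Finally I would verify the universal property at level $i$. Given a compatible family $\phi_j \colon X_j \to (p_*C)_j$ over $A_j$ for all $j \in \McI_{<n}$ (coming from the inductive hypothesis), an extension $\phi_i$ compatible with the $\McI$-action has its second leg into $\lim_f A_f^*(p_*C)_j$ forced by functoriality to be the unique map lifting the family $(\phi_j \cdot X_f)_f$, so the only free datum is the $\kappa_i$-projection, which by $p_i^* \dashv (p_i)_*$ transposes to a map $X_i \times_{A_i} B_i \to C_i$ over $B_i$ — precisely the $i$-component of the adjoint $p^*X \to C$. The main obstacle, and the only substantial verification, is that the pullback's agreement condition translates under the $\bC$-adjunction to exactly the condition needed for these collected transposes to respect the $\McI$-action $C_f$, thereby forming a genuine morphism $p^*X \to C$ in $\sfrac{\bC^\McI}{B}$. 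This last check is a Beck--Chevalley-style bookkeeping computation encoded precisely in the definition of $(B_f^*\ev)^\dagger$.
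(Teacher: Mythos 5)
Your route is genuinely different from the paper's. The paper does not verify the pushforward universal property directly: it cites \cite[Construction 2.13 and Corollary 5.6]{fkl24} for the existence of $p_*C$ and for a pullback square presenting $\kappa_i$ against $(p_i)_*(\widehat{M}_ik)$ over a map $(m_iA)^*M_i(p_*C) \to (p_i)_*(m_iB)^*(M_iC)$, and then devotes its proof entirely to translating that matching-object formulation into the displayed limit formulation via \cref{lem:mat-obj-lim}, identifying the transpose of $(B_f^*\ev)^\dagger \cdot A_f^*\kappa_j$ with $B_f^*\ev \cdot B_f^*p_j^*\kappa_j$ and using that the counit $\ev_j$ is the transpose of $p_j^*\kappa_j$. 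You instead reprove the cited result from scratch by induction on degree, checking the adjunction $\sfrac{\bC^\McI}{A}(x,p_*k) \cong \sfrac{\bC^\McI}{B}(p^*x,k)$ level by level. That is a legitimate, more self-contained strategy, and your reduction of the level-$i$ datum to a single map $X_i \to (p_i)_*C_i$ whose image in $\lim_f (p_i)_*B_f^*C_j$ is forced is the right shape of argument; what it buys is independence from \cite{fkl24}, at the cost of having to carry out the adjunction check that the paper outsources.

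Two concrete gaps remain. First, existence of the bottom-right vertex: you derive $\lim_f (p_i)_*B_f^*C_j$ from the assumed existence of $(M_ip)_*M_iC$ ``since right adjoints preserve limits,'' but that limit lives over $A_i$ and involves $(p_i)_*$, not $(M_ip)_*$; passing between the two requires base change along the square formed by $m_iB$, $m_iA$, $p_i$, $M_ip$, which is not a pullback, so no Beck--Chevalley argument applies, and a partially defined right adjoint only preserves a limit whose individual terms it is already known to be defined on. You must actually produce $(p_i)_*(m_iB)^*M_iC$ (this is where the paper leans on \cite{fkl24}, and where something like \cref{lem:psfw-distr} enters). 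Second, you explicitly defer ``the only substantial verification'' --- that the pullback's agreement condition transposes to naturality of the family $p_i^*X_i \to C_i$ over the non-identity maps $f \colon i \to j$ --- to unspecified bookkeeping. That step, which hinges on the identity $\ev_j = (p_j^*\kappa_j)^\dagger$ and the isomorphism $p_i^*A_f^* \cong B_f^*p_j^*$, is the actual content of the lemma; until it is written out, the proposal is a plan rather than a proof.
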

\begin{proof}
  By \cite[Construction 2.13 and Corollary 5.6]{fkl24}, each $\kappa_i$ is
  isomorphic to the following pullback over $A_i$
  \begin{equation*}
    \begin{tikzcd}[cramped]
      {(p_*C)_i} & {(p_i)_*C_i} \\
      {(m_iA)^*M_i(p_*C)} & {(p_i)_*(m_iB)^*(M_iC)}
      \arrow["{\kappa_i}", from=1-1, to=1-2]
      \arrow[from=1-1, to=2-1]
      \arrow["\lrcorner"{anchor=center, pos=0.1}, draw=none, from=1-1, to=2-2]
      \arrow["{(p_i)_*(\widehat{M}_ik)}", from=1-2, to=2-2]
      \arrow[from=2-1, to=2-2]
    \end{tikzcd} \in \sfrac{\bC}{A_i}
  \end{equation*}
  where the exponential transpose of the bottom map is
  \begin{equation*}
    (m_iB)^*((M_ip)^*M_i(p_*C)) \cong
    (m_iB)^*M_i(p^*p_*C) \xrightarrow{(m_iB)^*M_i(\ev)}
    (m_iB)^*M_iC \in \sfrac{\bC}{B_i}
  \end{equation*}
  By \Cref{lem:mat-obj-lim}, we see that as objects and maps in $\sfrac{\bC}{M_iB}$,
  \begin{equation*}
    \begin{tikzcd}[cramped, column sep=7em]
      {M_i(p^*p_*C)} & {M_iC} \\
      {\displaystyle \lim_{\id \neq f \colon i \to j}^{\sfrac{\bC}{M_iB}} \proj_f^*(p_j^*(p_*C)_j)} & {\displaystyle \lim_{\id \neq f \colon i \to j}^{\sfrac{\bC}{M_iB}} \proj_f^*(C_j)}
      \arrow["{M_i(\ev)}", from=1-1, to=1-2]
      \arrow["\cong"', tail reversed, from=1-1, to=2-1]
      \arrow["\cong", tail reversed, from=1-2, to=2-2]
      \arrow["{\scriptstyle \lim_{\id \neq f \colon i \to j}^{\bC/M_iB} \proj_f^*\ev_j}"', from=2-1, to=2-2]
    \end{tikzcd}
  \end{equation*}
  Thus, further pulling back along $m_iB \colon B_i \to M_iB$, one observes
  \begin{equation*}
    \begin{tikzcd}[cramped, column sep=7em]
      {(m_iB)^*M_i(p^*p_*C)} & {(m_iB)^*M_iC} \\
      {\displaystyle \lim_{\id \neq f \colon i \to j}^{\bC/B_i} B_f^*(p_j^*(p_*C)_j)} & {\displaystyle \lim_{\id \neq f \colon i \to j}^{\bC/B_i} B_f^*C_j}
      \arrow["{(m_iB)^*M_i(\ev)}", from=1-1, to=1-2]
      \arrow["\cong"', tail reversed, from=1-1, to=2-1]
      \arrow["\cong", tail reversed, from=1-2, to=2-2]
      \arrow["{\scriptstyle \lim_{\id \neq f \colon i \to j}^{\bC/B_i} B_f^*\ev_j}"', from=2-1, to=2-2]
    \end{tikzcd}
  \end{equation*}
  Meanwhile, for each $f \colon i \to j \neq \id$, the exponential transpose of
  the map
  ${A_f^*(p_*C)_j} \xrightarrow{A_f^*\kappa_j} {A_f^*(p_j)_*C_j}
  \xrightarrow{(B^*_f\ev)^\dagger} {(p_i)_*B_f^*C_j}$ is
  \begin{equation*}
    \begin{tikzcd}[cramped]
      {B_f^*p_j^*(p_*C)_j} & {B_f^*p_j^*(p_j)_*C_j} & {B_f^*C_j}
      \arrow["{B_f^*p_j^*\kappa_j}", from=1-1, to=1-2]
      \arrow["{B_f^*\ev}", from=1-2, to=1-3]
    \end{tikzcd} \in \sfrac{\bC}{B_i}
  \end{equation*}
  We now conclude by noting that by \cite[Constructions 2.14 and 2.16]{fkl24},
  the counit $\ev_j \colon p_j^*(p_*C)_j \to C_j \in \sfrac{\bC}{A_j}$ at
  component $j$ is implemented as the exponential transpose of the map
  $p_j^*\kappa_j \colon (p_*C)_j \to (p_j)_*C_j$.
\end{proof}

Next, assume that $\bC$ is equipped with a pullback-stable wide subcategory of
fibrations $\McF \subseteq \bC$ such that pushforwards of fibrations along
fibrations exist.
The goal is to now prove that in $\bC^\McI$, the pushforward of a Reedy
fibration along a Reedy fibration exist and remains a Reedy fibration.

Existence is given by \Cref{lem:inv-psfw}.
To help with showing Reedy fibrancy, we first prove the following distributive
law.
\begin{lemma}[{\cite[Paragraph 1.2]{gk13}}]\label{lem:psfw-distr}
  Suppose that one has a map $p \colon B \to A \in \bC$ and
  $k \colon C \to B \in \sfrac{\bC}{B}$ such that the pushforward
  $p_*k \colon p_*C \to A \in \sfrac{\bC}{A}$ exists.

  If $d \colon D \to C$ is such that the pushforward
  $q_*(\ev^*d) \colon q_*(\ev^*D) \to p_*C$ of
  $\ev^*d \colon \ev^*D \to p^*p_*C$ along the connecting map
  $q \colon p^*p_*C \to p_*C$ of the pullback of $p \colon B \to A$ along
  $p_*k \colon p_*C \to A$ also exists, then the composition
  $q_*(\ev^*D) \to p_*C \to A$ is also the pushforward of $D \to C \to B$ along
  $p$.
  \begin{equation*}
    \begin{tikzcd}[cramped]
      D & {\ev^*D} && {q_*(\ev^*D)} \\
      C & {p^*p_*C} && {p_*C} \\
      & B && A
      \arrow["d"', from=1-1, to=2-1]
      \arrow[from=1-2, to=1-1]
      \arrow["\lrcorner"{anchor=center, pos=0.125, rotate=-90}, draw=none, from=1-2, to=2-1]
      \arrow["{\ev^*d}", from=1-2, to=2-2]
      \arrow["{q_*(\ev^*d)}", from=1-4, to=2-4]
      \arrow["k"', from=2-1, to=3-2]
      \arrow["\ev"', from=2-2, to=2-1]
      \arrow["q", from=2-2, to=2-4]
      \arrow[from=2-2, to=3-2]
      \arrow["\lrcorner"{anchor=center, pos=0.05}, draw=none, from=2-2, to=3-4]
      \arrow["{p_*k}", from=2-4, to=3-4]
      \arrow["p"', from=3-2, to=3-4]
    \end{tikzcd}
  \end{equation*}
\end{lemma}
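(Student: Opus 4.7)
The plan is to verify the universal property of $q_*(\ev^*D) \to p_*C \to A$ directly: for each $X \to A \in \bC$, I would construct a natural bijection between maps $X \to q_*(\ev^*D)$ over $A$ and maps $p^*X \to D$ over $B$, where $D$ is regarded as an object of $\sfrac{\bC}{B}$ via the composite $k \circ d$.

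The forward direction decomposes a map $X \to q_*(\ev^*D)$ over $A$ as a pair consisting of a morphism $\alpha \colon X \to p_*C$ in $\sfrac{\bC}{A}$ and a lift $X_\alpha \to q_*(\ev^*D)$ in $\sfrac{\bC}{p_*C}$. By the adjunction $q^* \dashv q_*$, such a lift corresponds to a map $q^*X_\alpha \to \ev^*D$ in $\sfrac{\bC}{p^*p_*C}$. Since $p^*p_*C = B \times_A p_*C$, pullback composition identifies $q^*X_\alpha \cong X \times_A B = p^*X$, and the universal property of the pullback $\ev^*D = D \times_C p^*p_*C$ in turn identifies such maps with morphisms $\phi \colon p^*X \to D$ satisfying $d \circ \phi = \ev \circ p^*\alpha$. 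Because $\ev$ is by construction a morphism in $\sfrac{\bC}{B}$, the composite $k \circ \ev$ is the pullback projection $p^*p_*C \to B$; consequently $k \circ d \circ \phi = k \circ \ev \circ p^*\alpha$ is the pullback projection $p^*X \to B$ independently of $\alpha$, so $\phi$ is automatically over $B$.

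Conversely, given $\phi \colon p^*X \to D$ over $B$, the composite $d \circ \phi \colon p^*X \to C$ is a morphism in $\sfrac{\bC}{B}$, which the adjunction $p^* \dashv p_*$ transposes to a unique $\alpha \colon X \to p_*C$ in $\sfrac{\bC}{A}$ satisfying $\ev \circ p^*\alpha = d \circ \phi$; reversing the above chain of bijections then produces the corresponding map $X \to q_*(\ev^*D)$ in $\sfrac{\bC}{A}$. Since all steps are natural in $X$, concatenating them yields the desired natural bijection. I expect the main obstacle to be the bookkeeping of slice categories rather than any deep content: in particular, recognizing that $\phi$'s being ``over $C$ via $\ev \circ p^*\alpha$'' for some (necessarily unique) $\alpha$ is equivalent to its being ``over $B$ via $k \circ d$'', an equivalence powered entirely by $\ev$ lying in $\sfrac{\bC}{B}$ and by the adjunction $p^* \dashv p_*$.
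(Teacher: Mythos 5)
Your proposal is correct and follows essentially the same route as the paper: both decompose a map $X \to q_*(\ev^*D)$ over $A$ into a map $\alpha \colon X \to p_*C$ over $A$ together with a lift, reduce the lift via the adjunction $q^* \dashv q_*$ and the pullback property of $\ev^*D$ to a map $p^*X \to D$ lying over $C$ via $\ev \circ p^*\alpha = \alpha^\dagger$, and then use the bijectivity of the exponential transpose $\sfrac{\bC}{A}(X, p_*C) \cong \sfrac{\bC}{B}(p^*X, C)$ to identify this with maps over $B$. The paper writes this as a bijection of coproducts indexed by $\alpha$ (resp.\ its transpose), but the content is the same.
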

\begin{proof}
  Fix $x \colon X \to A \in \sfrac{\bC}{A}$ so that we must exhibit a natural bijection
  \begin{equation*}
    \sfrac{\bC}{A}(x, (p_*k)_!(q_*\ev^*d)) \cong \sfrac{\bC}{B}(p^*x, k_!d)
  \end{equation*}
  A map $X \to q_*(\ev^*D)$ over $A$ is exactly a choice of a map
  $u \colon X \to p_*C$ over $A$ along with a factorisation of the chosen map
  $u$ via $q_*(\ev^*d)$.
  Applying the same reasoning to a map $p^*X \to D$ over $B$, we obtain the
  following bijections.
  \begin{align*}
    \sfrac{\bC}{A}(x, (p_*k)_!(q_*\ev^*d))
    &\cong \coprod_{u \in\sfrac{\bC}{A}(x, p_*k)} \sfrac{\bC}{p_*C}(u,q_*(\ev^*d))
    \\
    \sfrac{\bC}{B}(p^*x, k_!d)
    &\cong \coprod_{u \in\sfrac{\bC}{B}(p^*x, k)} \sfrac{\bC}{C}(u,d)
  \end{align*}

  We simplify the first bijection.
  Given $u \colon X \to p_*C$, one has
  $\sfrac{\bC}{p_*C}(u, q_*(\ev^*d)) \cong \sfrac{\bC}{p^*q_*C}(q^*\allowbreak u, \ev^*d)
  \cong \sfrac{\bC}{C}(\ev_!q^*u, \allowbreak d)$.
  So,
  $\sfrac{\bC}{A}(x, (p_*k)_!(q_*\ev^*d)) \cong
  \coprod_{u \in\sfrac{\bC}{A}(x, p_*k)} \sfrac{\bC}{C}(\ev_!q^*u, \allowbreak
  d)$, and it suffices to show a bijection
  \begin{equation*}
    \coprod_{u \in\sfrac{\bC}{A}(x, p_*k)} \sfrac{\bC}{C}(\ev_!q^*u, d)
    \cong
    \coprod_{u \in\sfrac{\bC}{B}(p^*x, k)} \sfrac{\bC}{C}(u,d)
  \end{equation*}
  But $\sfrac{\bC}{A}(x,p_*k) \cong \sfrac{\bC}{B}(p^*x,k)$ by the exponential
  transpose $(-)^\dagger$, and for each $u \colon X \to p_*C$, one has
  $\ev_!q^*u = \ev_!p^*u = u^\dagger$, so
  $(u \in \sfrac{\bC}{A}(x,p_*k), f \in \sfrac{\bC}{C}(\ev_!q^*u,d)) \mapsto
  (u^\dagger \in \sfrac{\bC}{B}(p^*x,k), f \in \sfrac{\bC}{C}(u^\dagger, d))$
  gives the required bijection.
\end{proof}

We then reproduce the usual result from model category theory that the matching
object functor preserves fibrations.
This is needed because the pushforward formula from \Cref{lem:inv-psfw} requires
pushforwards along the map between matching objects to exist.
\begin{proposition}\label{prop:mat-fib}
  Suppose each underslice of $\McI$ is finite.
  Then matching object functors preserve fibrations and Reedy fibrations are
  pointwise fibrations.
\end{proposition}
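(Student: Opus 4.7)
The plan is to prove both claims by simultaneous induction on $n = \deg i$. For the base case $n = 0$, the underslice $\partial(i/\McI)$ is empty, so $M_iE = M_iB = 1$, making $M_ip = \id_1$ a fibration; the relative matching map $\hM_ip$ then coincides with $p_i$, which is a fibration by Reedy fibrancy of $p$.

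For the inductive step, assume both claims for all $j$ with $\deg j < n$. By \Cref{lem:mat-obj-lim}, I would view $(M_iE, M_ip)$ as the limit in $\sfrac{\bC}{M_iB}$ of the diagram $D \colon \partial(i/\McI) \to \sfrac{\bC}{M_iB}$ sending $f \colon i \to j$ to $D_f = \proj_f^* E_j$. Each $D_f$ is a pullback of $p_j$, which is a fibration by the inductive hypothesis, so $D_f$ is a fibration over $M_iB$. The key step is to show that $D$ is Reedy fibrant as a diagram in $\sfrac{\bC}{M_iB}^{\partial(i/\McI)}$: using the canonical isomorphism $\partial((j,f)/\partial(i/\McI)) \cong \partial(j/\McI)$ (sending $g$ to $gf$) together with the fact that the pullback functor $\proj_f^*$ preserves limits, the matching object of $D$ at $(j,f)$ identifies with $\proj_f^*(B_j \times_{M_jB} M_jE)$ and the matching map of $D$ with $\proj_f^*(\hM_jp)$, which is a fibration by pullback-stability.

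I would then invoke the standard fact that the limit of a Reedy fibrant diagram over a finite inverse category is fibrant in any finitely complete category with a pullback-stable class of fibrations; this is proved by a secondary induction on the number of objects of the index category, removing one maximal-degree object at a time so that the total limit is presented as a pullback of the previously-constructed limit along a matching map that is a fibration by Reedy fibrancy. Applied to $D$ inside $\sfrac{\bC}{M_iB}$, this yields that $M_ip \colon M_iE \to M_iB$ is a fibration. The pointwise claim then follows by factoring $p_i$ as $E_i \xrightarrow{\hM_ip} B_i \times_{M_iB} M_iE \to B_i$: the first factor is a fibration by Reedy fibrancy of $p$, and the second is a pullback of the newly established fibration $M_ip$.

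The main obstacle will be the bookkeeping required to identify the matching map of $D$ at $(j, f)$ with $\proj_f^*(\hM_jp)$. Unwinding \Cref{lem:mat-obj-lim} at $j$ shows that the limit of $B_g^* E_{j'}$ over $g \in \partial(j/\McI)$, computed in $\sfrac{\bC}{B_j}$, is precisely $B_j \times_{M_jB} M_jE$, and pulling this along $\proj_f$ realises the matching data of $D$ as the pullback of the Reedy fibration data for $p$ at $j$, closing the inductive loop. Once this compatibility is in place, both the Reedy fibrancy of $D$ and the closing application of the limit-is-fibrant lemma are routine.
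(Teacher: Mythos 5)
Your proposal is correct and follows essentially the same route as the paper: present $M_ip$ via \Cref{lem:mat-obj-lim} as the limit of the diagram $D_f = \proj_f^*E_j$ in $\sfrac{\bC}{M_iB}$, identify the matching map of $D$ at $f \colon i \to j$ with $\proj_f^*(\hM_jp)$, and conclude by the fact that limits of Reedy fibrant diagrams over finite inverse categories are fibrant (the paper cites \cite[Lemma 11.8]{shu15} for this rather than re-deriving it). The only cosmetic difference is your outer induction on $\deg i$, which is not actually needed: the matching maps of $D$ are pullbacks of the relative matching maps $\hM_jp$, which are fibrations directly by the hypothesis that $p$ is a Reedy fibration.
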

\begin{proof}
  Let $p \colon E \twoheadrightarrow B \in \bC^\McI$ be a Reedy fibration so
  that we must show $M_ip \colon M_iE \to M_iB \in \bC$ is a fibration for each
  $i \in \McI$.

  Fix $i \in \McI$.
  To show that $M_ip \colon M_iE \to M_iB \in \bC$ is a fibration is to show
  that it is a fibrant object of $\sfrac{\bC}{M_iB}$.
  By \cite[Lemma 11.8]{shu15}, we are done if we just show that $M_iE$ is the
  limit of a Reedy fibrant diagram in $\sfrac{\bC}{M_iB}$.
  \Cref{lem:mat-obj-lim} already expresses $M_ip \colon M_iE \to M_iB$ as the
  limit the $\partial(i/\McI)$-shaped diagram $D$ valued in $\sfrac{\bC}{M_iB}$
  taking each $\id \neq f \colon i \to j$ to $D_f \coloneqq \proj_f^*E_j \to M_iB$.
  We just need to show $D \in (\sfrac{\bC}{M_iB})^{\partial(i/\McI)}$ is Reedy fibrant.

  To do this, fix $f \colon i \to j \in \partial(i/\McI)$.
  Then,
  \begin{align*}
    M_fD
    \cong \lim_{\id \neq g \colon j \to j'} D_{gf}
    \cong \lim_{\id \neq g \colon j \to j'} \proj_{gf}^*E_{j'}
    \cong \proj_{f}^*(\lim_{\id \neq g \colon j \to j'} B_g^*E_{j'})
  \end{align*}
  Because $D_f = \proj_f^*E_j$, this means that the matching map
  $m_f \colon D_f \to M_fD$ is the pullback of
  $E_j \to \lim_{\id \neq g \colon j \to j'} B_g^*E_{j'} \in \sfrac{\bC}{B_j}$
  along $\proj_f \colon M_iB \to B_j$.
  But this is exactly the relative matching map of the Reedy fibration
  $p \colon E \twoheadrightarrow B \in \bC^\McI$ at $j$, so the result follows.
\end{proof}

Now, we can show pushforwards along Reedy fibrations preserve Reedy fibrations.
\begin{proposition}\label{prop:reedy-fib-psfw}
  Suppose $\bC$ is equipped with a pullback-stable wide subcategory of
  fibrations such that fibrations are stable under pushfoward along fibrations
  and $\McI$ is finite.
  Then, Reedy fibrations in $\bC^\McI$ are stable under pushforwards along Reedy
  fibrations.
\end{proposition}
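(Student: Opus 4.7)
The plan is to extract both existence and Reedy fibrancy of $p_*k$ directly from the pullback-square description of the inductive construction in \Cref{lem:inv-psfw}. For existence, I would fix $i \in \McI$ and apply \Cref{prop:mat-fib} to both Reedy fibrations $p$ and $k$: this yields that $p_i$, $k_i$, $M_ip$, and $M_ik \colon M_iC \to M_iB$ are all fibrations in $\bC$. Consequently, both $(p_i)_*C_i$ and the pushforward of $M_ik$ along $M_ip$ exist (as pushforwards of fibrations along fibrations), so \Cref{lem:inv-psfw} applies and $p_*C$ exists.

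For Reedy fibrancy at $i$, the idea is to reuse the pullback square over $A_i$ displayed in the proof of \Cref{lem:inv-psfw}, whose corners are $(p_*C)_i$, $(p_i)_*C_i$, $(m_iA)^*M_i(p_*C)$, and $(p_i)_*(m_iB)^*M_iC$, and whose right vertical arrow is recorded there as $(p_i)_*\hM_ik$. Since $(m_iA)^*M_i(p_*C) \cong A_i \times_{M_iA} M_i(p_*C)$, the left vertical arrow of that square is precisely the relative matching map $\hM_i(p_*k)$. Thus I would observe that $\hM_i(p_*k)$ is exhibited as the pullback of $(p_i)_*\hM_ik$. Because $k$ is a Reedy fibration, $\hM_ik$ is a fibration, so by hypothesis its pushforward $(p_i)_*\hM_ik$ along the fibration $p_i$ is again a fibration; pullback-stability of fibrations then finishes the argument.

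The main obstacle is purely the bookkeeping identification that the left vertical arrow of the square really is the relative matching map $\hM_i(p_*k)$, for which I would unfold the definition of $(m_iA)^*$ as pullback and the cone-description of $M_i(p_*C)$. Once this is noted, the Reedy fibrancy reduces to a single application of stability of fibrations under pushforward along a fibration together with pullback-stability, with no further induction needed beyond what is already encoded in \Cref{lem:inv-psfw}.
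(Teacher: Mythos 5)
Your overall route is the same as the paper's: use \Cref{prop:mat-fib} to get that $p_i$, $k_i$, $M_ip$, $M_ik$ are fibrations (so the hypotheses of \Cref{lem:inv-psfw} hold and $p_*C$ exists), then identify the relative matching map $\hM_i(p_*k)$ with the left vertical arrow of the pullback square from the proof of \Cref{lem:inv-psfw}, so that $\hM_i(p_*k)$ is a pullback of $(p_i)_*\hM_ik$. That identification is correct and is exactly what the paper records.

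The gap is in your final sentence: ``by hypothesis its pushforward $(p_i)_*\hM_ik$ along the fibration $p_i$ is again a fibration.'' The hypothesis that fibrations are stable under pushforward along fibrations is a statement about \emph{objects} of a slice: if $D\to B$ is a fibration and $p\colon B\twoheadrightarrow A$ is a fibration, then the pushforward $p_*D\to A$ is a fibration over $A$. What you need here is a statement about \emph{morphisms}: that the functor $(p_i)_*\colon \sfrac{\bC}{B_i}\to\sfrac{\bC}{A_i}$ sends the map $\hM_ik\colon C_i\to (m_iB)^*M_iC$ (a fibration between two objects over $B_i$) to a fibration $(p_i)_*C_i\to (p_i)_*(m_iB)^*M_iC$. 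That does not follow directly from the object-level hypothesis; it is precisely what \Cref{lem:psfw-distr} is in the paper for. The distributive law identifies $(p_i)_*\hM_ik$ with the genuine pushforward of $\ev^*\hM_ik$ (a pullback of $\hM_ik$, hence a fibration) along the connecting map $q=((p_i)_*k)^*p_i$ (a pullback of the fibration $p_i$, hence a fibration), and only then does the hypothesis apply. You should insert this reduction; with it, your argument coincides with the paper's proof.
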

\begin{proof}
  By \Cref{prop:mat-fib}, if $p \colon B \twoheadrightarrow A \in \bC^\McI$
  is a Reedy fibration then so is each of the maps
  $M_ip \colon M_iB \twoheadrightarrow M_iA \in \bC$ for $i \in \McI$.
  Therefore, by \Cref{lem:inv-psfw}, the pushforwards of a Reedy fibration
  $k \colon C \twoheadrightarrow B$ along $p \colon B \twoheadrightarrow A$
  exists and each relative matching map $\hM_i(p_*k)$ is a pullback of
  $(p_i)_*\hM_ik$.
  By \Cref{lem:psfw-distr}, $(p_i)_*\hM_ik$ is the pushforward of $\ev^*\hM_ik$
  along $((p_i)_*k)^*p_i$.
  The result now follows because $k$ is a Reedy fibration.
\end{proof}

\section{Pushforwards in Homotopical Inverse Diagrams} \label{sec:homotopical}

Next, let $\McI$ and $\bC$ be equipped with two wide subcategories of weak
equivalences respectively containing the isomorphisms and closed under
2-out-of-3.
\begin{definition}
  Denote by $\bC^\McI_\Msh \hookrightarrow \bC^\McI$ the full subcategory of
  diagrams in $\bC^\McI$ preserving weak equivalences.
\end{definition}

We wish for $\bC^\McI_\Msh$ to admit certain classes of pushforwards.
Namely, we wish for homotopical Reedy fibrations to be closed under pushforwards
of homotopical Reedy fibrations.
For this to happen, we assume some logical conditions on the behaviour of
pullbacks and pushforwards in $\bC$ with respect to the chosen class of weak
equivalences and fibrations.
\begin{assumption}\label{asm:pb-pi-well-behaved}
  We say that pullbacks in $\bC$ are \emph{homotopically logically behaved}
  with respect to the chosen class of fibrations and weak equivalences when
  \begin{itemize}
    \item weak equivalences are preserved by pullback along fibrations; and
    \item pullbacks preserve weak equivalences between fibrations.
  \end{itemize}

  Similarly, pushforwards in $\bC$ are \emph{homotopically logically behaved}
  with respect to the chosen class of fibrations and weak equivalences when
  \begin{itemize}
    \item pushforwards along fibrations preserve weak equivalences between fibrations; and
    \item whenever $g,f$ below are weak equivalences and $p,p'$ are fibrations,
    the ``precomposition'' map $(g^*\ev)^\dagger \colon f^*p_*C \to p'_*g^*C$ is
    also a weak equivalence.
    \begin{equation*}
      \begin{tikzcd}[cramped, row sep=small]
        {g^*C} && {f^*p_*C} & {p'_*g^*C} \\
        & C && {p_*C} \\
        {B'} && {A'} \\
        & B && A
        \arrow["\sim", from=1-1, to=2-2]
        \arrow[two heads, from=1-1, to=3-1]
        \arrow["{(g^*\ev)^\dagger}", "\sim"', from=1-3, to=1-4]
        \arrow[two heads, from=1-3, to=3-3]
        \arrow[two heads, from=1-4, to=3-3]
        \arrow[two heads, from=2-4, to=4-4]
        \arrow["{p'}"{pos=0.7}, two heads, from=3-1, to=3-3]
        \arrow["g"', "\sim", from=3-1, to=4-2]
        \arrow["f", "\sim"', from=3-3, to=4-4]
        \arrow["p"', two heads, from=4-2, to=4-4]
        \arrow[crossing over, two heads, from=2-2, to=4-2]
        \arrow["\sim", crossing over, from=1-3, to=2-4]
      \end{tikzcd}
    \end{equation*}
  \end{itemize}
\end{assumption}

Now, the task is to show that if
$p \colon B \twoheadrightarrow A \in \bC_\Msh^\McI$ is a map between homotopical
diagrams, then the pushforwards of any homotopical
$k \colon C \to B \in \sfrac{\bC_\Msh^\McI}{B}$ remains homotopical.
This amounts to showing that whenever
$w \colon i \xrightarrow{\sim} j \in \McI$, then
$(p_*C)_i \to A_f^*(p_*C)_j \in \bC$ is a weak equivalence.
By \Cref{lem:inv-psfw} and the logical homotopical behaviour of pushfowards,
we are done if we somehow have that $(p_*C)_i \to A_f^*(p_*C)_j \in \bC$ is a
pullback of $(p_i)_*C_i \to (p_i)_*B_f^*C_j$.
This is the same as requiring $w$ to be initial in the boundary of $i$.

Another option is to follow \cite[Proposition 5.13]{kl21} and show that all
horizontal maps and the right-most map in \Cref{lem:inv-psfw} are weak
equivalences, so that we obtain the required result by 2-out-of-3.
This method amounts to showing that the comparison maps
$\kappa_i \colon (p_*C)_i \to (p_i)_*C_i$ from \Cref{lem:inv-psfw} are weak
equivalences by induction.
For this, we need to know that the map between limits which $\kappa_i$ occur as
is in fact a map between limits of Reedy fibrant diagrams in the slice
$\sfrac{\bC}{A_i}$.
The following computation is useful for verifying the Reedy fibrancy.
\begin{lemma}\label{lem:reedy-fib-local}
  Let $p \colon E \to B \in \bC^\McI$ be a Reedy fibration and $i \in \McI$.
  The diagram $\ol{E} \colon \partial(i/\McI) \to \sfrac{\bC}{B_i}$ with action
  on objects $f \colon i \to j \in \partial(i/\McI)$ defined as
  $\ol{E}_f \coloneqq B_f^*E_j \to B_i \in \sfrac{\bC}{B_i}$ and action on maps
  $g \colon j \to j'$ defined by universality of the pullback is a Reedy fibrant
  diagram.
  \begin{equation*}
    \begin{tikzcd}[cramped, row sep=small, column sep=small]
      {\ol{E}_f} &&& {E_j} \\
      && {\ol{E}_{gf}} && {E_{j'}} \\
      &&& {B_j} \\
      & {B_i} &&& {B_{j'}}
      \arrow[from=1-1, to=1-4]
      \arrow["{\ol{E}_g}"{description}, from=1-1, to=2-3]
      \arrow[from=1-1, to=4-2]
      \arrow["{E_g}", from=1-4, to=2-5]
      \arrow["{p_j}"{pos=0.7}, from=1-4, to=3-4]
      \arrow[from=2-3, to=4-2]
      \arrow["{p_{j'}}", from=2-5, to=4-5]
      \arrow["{B_g}", from=3-4, to=4-5]
      \arrow["{B_f}"{description}, from=4-2, to=3-4]
      \arrow["{B_{gf}}"', from=4-2, to=4-5]
      \arrow[crossing over, from=2-3, to=2-5]
    \end{tikzcd}
  \end{equation*}
\end{lemma}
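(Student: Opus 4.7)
The plan is to mimic the matching-object calculation already performed in the proof of \Cref{prop:mat-fib}, but one level closer to the ground: where \Cref{lem:mat-obj-lim} described $M_iE \to M_iB$ as a limit in $\sfrac{\bC}{M_iB}$ of the diagram $f \mapsto \proj_f^*E_j$, here we pull everything back along the matching map $m_iB \colon B_i \to M_iB$. First I would fix $f \colon i \to j \in \partial(i/\McI)$ and identify the matching object $M_f\ol{E}$ in the diagram category $(\sfrac{\bC}{B_i})^{\partial(i/\McI)}$. Objects of $\partial(f/\partial(i/\McI))$ are non-identity maps $g \colon j \to j'$ in $\McI$, so
\begin{equation*}
  M_f\ol{E} \cong \lim_{\id\relax \neq g \colon j \to j'} B_{gf}^*E_{j'} \cong B_f^*\!\left(\lim_{\id\relax \neq g \colon j \to j'} B_g^*E_{j'}\right),
\end{equation*}
using that $B_f^*$ preserves limits.

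Next I would rewrite the inner limit using \Cref{lem:mat-obj-lim}: that lemma expresses $M_jp \colon M_jE \to M_jB$ as the $\partial(j/\McI)$-limit of $\proj_g^*E_{j'}$ in $\sfrac{\bC}{M_jB}$, so pulling back along $m_jB \colon B_j \to M_jB$ yields $(m_jB)^*M_jE \cong \lim_{\id \neq g} B_g^*E_{j'}$ as objects of $\sfrac{\bC}{B_j}$. Plugging this in,
\begin{equation*}
  M_f\ol{E} \cong B_f^*(m_jB)^*M_jE, \qquad \ol{E}_f = B_f^*E_j,
\end{equation*}
and by a naturality check the matching map $\ol{E}_f \to M_f\ol{E}$ is just $B_f^*$ applied to the map $E_j \to (m_jB)^*M_jE$, which is precisely the relative matching map $\hM_jp$ of the Reedy fibration $p$ at $j$.

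Since $p$ is a Reedy fibration, $\hM_jp$ is a fibration in $\bC$, and fibrations are pullback-stable by assumption, so $B_f^*\hM_jp$ is a fibration. Being a fibration in $\sfrac{\bC}{B_i}$ is the same as being a fibration in $\bC$, so the matching map of $\ol{E}$ at every $f \in \partial(i/\McI)$ is a fibration, which is precisely Reedy fibrancy of $\ol{E}$.

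The only real obstacle is bookkeeping: making sure the identification of indexing categories $\partial(f/\partial(i/\McI)) \simeq \{g \colon j \to j' \in \McI \mid g \neq \id\}$ is correct and that the reindexed matching-object formula lines up on the nose with a pullback of $\hM_jp$ rather than something merely isomorphic to it. Given \Cref{lem:mat-obj-lim}, there is no new geometric content beyond the argument of \Cref{prop:mat-fib}; the present lemma is essentially that argument restated one pullback further down.
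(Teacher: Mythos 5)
Your proposal is correct and follows essentially the same route as the paper's proof: identify $\partial(f/\partial(i/\McI))$ with non-identity maps out of $j$, compute $M_f\ol{E} \cong B_f^*\bigl(\lim_{\id \neq g} B_g^*E_{j'}\bigr) \cong B_f^*(m_jB)^*M_jE$ via \Cref{lem:mat-obj-lim}, recognize the matching map as $B_f^*$ applied to the relative matching map $\hM_jp$, and conclude by pullback-stability of fibrations. No substantive difference from the paper's argument.
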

\begin{proof}
  Fix $f \colon i \to j \in \partial(i/\McI)$.
  We must show that the matching map
  $m_f\ol{E} \colon \ol{E}_f \to M_f\ol{E} \in \sfrac{\bC}{B_i}$ is a fibration.
  First, we calculate the matching object
  $M_f\ol{E} \to B_i \in \sfrac{\bC}{B_i}$.
  Because $\sfrac{f}{i/\McI} \cong j/\McI$, we see that
  \begin{align*}
    M_f\ol{E} = \lim_{\id \neq g \colon f \to f' \in i/\McI} \ol{E}_{f'}
    \cong  \lim_{\id \neq g \colon j \to j'} B_{gf}^*E_{j'}
    \cong B_f^*(\lim_{\id \neq g \colon j \to j'} B_g^*E_{j'})
  \end{align*}
  Then, we recall that $\ol{E}_f = B_f^*(E_j)$ by definition.
  Because fibrations are pullback-stable, it suffices to show that
  $E_j \to \lim_{\id \neq g \colon j \to j'} B_g^*E_{j'} \in \sfrac{\bC}{B_j}$
  is a fibration.
  By \Cref{lem:mat-obj-lim}, one has
  $M_jE \cong \lim_{\id \neq g \colon j \to j'}\proj_g^*E_{j'} \in
  \sfrac{\bC}{M_jB}$.
  Because each $B_g = \proj_g \cdot m_jB$, it follows that
  $\lim_{\id \neq g \colon j \to j'} B_g^*E_{j'} \cong (m_jB)^* \lim_{\id \neq g
    \colon j \to j'} \proj_g^*E_{j'} \cong (m_jB)^*M_jB$.
  By definition of $E \twoheadrightarrow B \in \bC^\McI$ being a Reedy
  fibration, it follows now that $E_j \twoheadrightarrow (m_jB)^*M_jB$ is a
  fibration.
\end{proof}

\begin{lemma}\label{lem:htpy-psfw-all-we}
  Suppose that $i \in \McI$ is such that all maps under $i$ are weak
  equivalences.
  Then, for a Reedy fibration between homotopical diagrams
  $p \colon B \twoheadrightarrow A \in \bC^\McI_\Msh$ and a Reedy fibrant
  homotopical object
  $k \colon C \twoheadrightarrow B \in \sfrac{\bC^\McI_\Msh}{B}$, the
  pushforward $p_*C \twoheadrightarrow A \in \sfrac{\bC^\McI}{B}$ sends all maps
  under $i$ to weak equivalences.
\end{lemma}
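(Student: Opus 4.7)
The plan is to proceed by strong induction on $\deg i$, first establishing the auxiliary fact that the comparison map $\kappa_i \colon (p_*C)_i \to (p_i)_*C_i$ from \Cref{lem:inv-psfw} is a weak equivalence for every $i$ satisfying the hypothesis, and then deriving the stated claim as a corollary. Before the induction, I note that the hypothesis is closed downward: if $j$ is strictly under $i$ via $f$ and all maps under $i$ are weak equivalences, then for any $g \colon j \to j''$ both $f$ and $gf$ are weak equivalences, forcing $g$ to be a weak equivalence by $2$-out-of-$3$ in $\McI$.

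\paragraph{Inductive step for $\kappa_i$.} The base case $\deg i = 0$ is trivial: $\partial(i/\McI)$ is empty and $\kappa_i = \mathrm{id}$ from the formula of \Cref{lem:inv-psfw}. For the inductive step, I would use the pullback formula: $\kappa_i$ is the pullback of $L \to L'$ along $(p_i)_*C_i \to L'$, where $L = \lim_f A_f^*(p_*C)_j$ and $L' = \lim_f (p_i)_*B_f^*C_j$, and where $(p_i)_*C_i \to L'$ is $(p_i)_*$ applied to the relative matching map $\hM_i k$ (a fibration by Reedy fibrancy of $k$, whose pushforward along the fibration $p_i$ is again a fibration). Each component of $L \to L'$ factors as $A_f^*\kappa_j$ followed by $(B_f^*\ev)^\dagger$; both are weak equivalences -- the first by the induction hypothesis on $\kappa_j$ and the pullback axiom of \Cref{asm:pb-pi-well-behaved}, the second by its precomposition axiom. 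By \Cref{lem:reedy-fib-local} applied to the Reedy fibration $p_*C \twoheadrightarrow A$ from \Cref{prop:reedy-fib-psfw} and by its analogue obtained by pushing forward through $(p_i)_*$, both $L$ and $L'$ arise as limits of Reedy fibrant diagrams over $\partial(i/\McI)$ in $\sfrac{\bC}{A_i}$, so the pointwise weak equivalence $L \to L'$ is itself a weak equivalence. Hence $\kappa_i$, as the pullback of this weak equivalence along a fibration, is a weak equivalence by \Cref{asm:pb-pi-well-behaved}.

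\paragraph{Derivation of the stated claim.} For a non-identity $w \colon i \to j'$, factor $(p_*C)_w$ as $(p_*C)_i \xrightarrow{\alpha} A_w^*(p_*C)_{j'} \to (p_*C)_{j'}$; the second factor is a pullback of the weak equivalence $A_w$ along the fibration $(p_*C)_{j'} \twoheadrightarrow A_{j'}$ (from \Cref{prop:reedy-fib-psfw} and \Cref{prop:mat-fib}), hence a weak equivalence. Projecting the pullback of \Cref{lem:inv-psfw} at $w$ yields a commutative square with $\kappa_i$ on top (a weak equivalence by the previous paragraph), $(p_i)_*(k_i, C_w)$ on the right (a weak equivalence since $(k_i, C_w) \colon C_i \to B_w^*C_{j'}$ is a weak equivalence between fibrations over $B_i$ -- by $2$-out-of-$3$ with the pullback weak equivalence $B_w^*C_{j'} \to C_{j'}$ -- and $(p_i)_*$ preserves such by \Cref{asm:pb-pi-well-behaved}), and $(B_w^*\ev)^\dagger \cdot A_w^*\kappa_{j'}$ on the bottom (a weak equivalence since $\kappa_{j'}$ is a weak equivalence and by \Cref{asm:pb-pi-well-behaved}). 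A $2$-out-of-$3$ argument on the commutative square then gives that $\alpha$ is a weak equivalence.

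\paragraph{Main obstacle.} The main obstacle is confirming that $L \to L'$ is a weak equivalence, which relies on the Reedy fibrancy of the pushforward diagram $f \mapsto (p_i)_*B_f^*C_j$ -- deduced by applying $(p_i)_*$ (a right adjoint, hence commuting with matching objects, that preserves fibrations) to the Reedy fibrant diagram of \Cref{lem:reedy-fib-local} -- together with the standard fact that limits of Reedy fibrant diagrams in $\sfrac{\bC}{A_i}$ preserve pointwise weak equivalences.
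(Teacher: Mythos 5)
Your proposal is correct and follows essentially the same route as the paper: an induction on degree showing each $\kappa_i$ is a weak equivalence via the pullback formula of \cref{lem:inv-psfw}, right properness, the precomposition axiom of \cref{asm:pb-pi-well-behaved}, and the Reedy-fibrancy of the two $\partial(i/\McI)$-indexed diagrams (via \cref{lem:reedy-fib-local} and continuity of $(p_i)_*$), followed by the same 2-out-of-3 argument on the comparison square to conclude. Your explicit remark that the hypothesis ``all maps under $i$ are weak equivalences'' is inherited by every $j$ under $i$ (by 2-out-of-3 in $\McI$) is a detail the paper leaves implicit, and is a welcome addition since the induction genuinely needs it.
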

\begin{proof}
  We first show that each comparison
  map $\kappa_j \colon (p_*C)_j \to (p_j)_*C_j$ is a weak equivalence.
  Inductively assume that this is true for all $j$ with degree less than $i$.
  Because $C \twoheadrightarrow B \in \sfrac{\bC^\McI}{B}$ is Reedy fibrant and
  pushforwards along fibrations preserve fibrations by \Cref{lem:psfw-distr}, it
  suffices to show that the map between limits
  \begin{equation*}
    \lim_{\id\relax \neq f \colon i \to j}^{\bC/A_i}(A_f^* (p_*C)_j)
    \xrightarrow{\lim_{\id\relax \neq f \colon i \to j}^{\bC/A_i}((B_f^*\ev)^\dagger \cdot A_f^*\kappa_j)}
    \lim_{\id\relax \neq f \colon i \to j}^{\bC/A_i}{((p_i)_*B_f^*C_j)}
  \end{equation*}
  is a weak equivalence.

  By induction and right properness, each $A_f^*\kappa_j$ is a weak equivalence.
  Further, pushforwards are logically behaved, so each ``precomposition map''
  $(B_f^*\ev)^\dagger$ is a weak equivalence because all maps under $i$ are weak
  equivalences and the diagrams $A,B \in \bC^\McI$ are homotopical by
  assumption.
  Thus, by \cite[Lemma 11.8]{shu15}, the induced map between limits is a weak
  equivalence if we show that the limits are limits of Reedy fibrant diagrams.
  Because $p_*C \twoheadrightarrow A$ is Reedy fibrant by
  \Cref{prop:reedy-fib-psfw}, it follows by \Cref{lem:reedy-fib-local} that the
  diagram $(A_f^*(p_*C) \in \sfrac{\bC}{A_i})_{\id \neq f \colon i \to j}$ is
  Reedy fibrant.
  By \Cref{lem:reedy-fib-local} again,
  $(B_f^*C_j \in \sfrac{\bC}{B_i})_{\id \neq f \colon i \to j}$ is Reedy
  fibrant, and because the pushforward map is continuous, it follows that
  $((p_i)_*B_f^*C_j \in \sfrac{\bC}{A_i})_{\id \neq f \colon i \to j}$ is also
  Reedy fibrant.
  This completes the inductive argument to show each $\kappa_i$ is a weak
  equivalence.

  Now let $f \colon i \to j$ be a non-identity map.
  Then, by 2-out-of-3, the map $(k_i, C_f) \colon C_i \to B_f^*C_j$ is a weak
  equivalence.
  Therefore, by assumption, $(p_i)_*(k_i,C_f)$ is a weak equivalence.
  We have already showed that $\kappa_i$ and $\kappa_j$ is a weak equivalence,
  so in particular $A_f^*\kappa_j$ is a weak equivalence.
  Also, we have already observed that $(B_f^*\ev)^\dagger$ is a weak
  equivalence.
  Therefore, by 2-out-of-3, it follows that
  $((p_*k)_i, (p_*C)_f) \colon (p_*C)_i \to A_f^*(p_*C)_j$ is a weak
  equivalence.
  \begin{equation*}
    \begin{tikzcd}[cramped]
      {(p_*C)_i} && {(p_i)_*C_i} \\
      {A_f^*(p_*C)_j} & {A_f^*(p_j)_*C_j} & {(p_i)_*B_f^*C_j} \\
      {A_i} & {(p_*C)_j} \\
      & {A_j}
      \arrow["{\kappa_i}", "\sim"', from=1-1, to=1-3]
      \arrow["{((p_*k)_i, (p_*C)_f)}"', "\sim", dashed, from=1-1, to=2-1]
      \arrow["{(p_i)_*(k_i,C_f)}", "\sim"', from=1-3, to=2-3]
      \arrow["{A_f^*\kappa_j}"', "\sim", from=2-1, to=2-2]
      \arrow[two heads, from=2-1, to=3-1]
      \arrow["\sim", from=2-1, to=3-2]
      \arrow["{(B_f^*\ev)^\dagger}"', "\sim", from=2-2, to=2-3]
      \arrow["{A_f}"', "\sim", from=3-1, to=4-2]
      \arrow[two heads, from=3-2, to=4-2]
    \end{tikzcd}
  \end{equation*}
  Applying 2-out-of-3 again and using the fact that
  $A_f^*(p_*C)_j \xrightarrow{\sim} (p_*C)_j$ is a weak equivalence because
  $A_f$ is a weak equivalence, it follows that
  $(p_*C)_f \colon (p_*C)_i \to (p_*C)_j$ is a weak equivalence.
\end{proof}

\begin{theorem}\label{thm:htpy-psfw}
  Suppose that for each $i \in \McI$, if there is a weak equivalence
  $\id \neq w \colon i \to j$ coming out of $i$ then either all maps coming out
  of $i$ are weak equivalences or $w$ is the initial object in
  $\partial(i/\McI)$.

  Then, pushforwards of homotopical Reedy fibrations along homotopical Reedy
  fibrations are again homotopical Reedy fibrations.
\end{theorem}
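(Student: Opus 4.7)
The plan is to reduce to showing that $p_*C$ is a homotopical diagram, since \Cref{prop:reedy-fib-psfw} already supplies Reedy fibrancy. Fix a non-identity weak equivalence $w \colon i \to j$ in $\McI$. The hypothesis on $\McI$ splits into two cases: either every map out of $i$ is a weak equivalence, in which case \Cref{lem:htpy-psfw-all-we} directly yields that $(p_*C)_w$ is a weak equivalence, or $w$ is initial in $\partial(i/\McI)$, which is what the bulk of the argument must treat.

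Assume $w$ is initial in $\partial(i/\McI)$. Then every $\partial(i/\McI)$-indexed limit reduces to evaluation at $w$; in particular $M_iA \cong A_j$, $M_iB \cong B_j$, $M_iC \cong C_j$, and the two limits appearing in \Cref{lem:inv-psfw} collapse to $A_w^*(p_*C)_j$ and $(p_i)_*B_w^*C_j$ respectively. The pullback formula of \Cref{lem:inv-psfw} thus identifies the map $((p_*k)_i,(p_*C)_w) \colon (p_*C)_i \to A_w^*(p_*C)_j$ as a pullback of $(p_i)_*(k_i,C_w) \colon (p_i)_*C_i \to (p_i)_*B_w^*C_j$. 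Under the same collapse, the relative matching map $\hM_ik$ of $k$ at $i$ equals $(k_i,C_w) \colon C_i \to B_w^*C_j$, so Reedy fibrancy of $k$ forces $(k_i,C_w)$ to be a fibration in $\bC$.

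I would then promote $(k_i,C_w)$ to a trivial fibration: it is a weak equivalence by 2-out-of-3 along $C_i \to B_w^*C_j \to C_j$, using that $C_w$ is a weak equivalence (since $C$ is homotopical) and that $B_w^*C_j \to C_j$ is a weak equivalence (pullback of the weak equivalence $B_w$ along the fibration $k_j$, by \Cref{asm:pb-pi-well-behaved}). Its pushforward $(p_i)_*(k_i,C_w)$ is then another trivial fibration, by the assumption that pushforwards of fibrations along fibrations are fibrations and by \Cref{asm:pb-pi-well-behaved}. Pulling this trivial fibration back yields a weak equivalence $((p_*k)_i,(p_*C)_w)$, again by \Cref{asm:pb-pi-well-behaved}, and composing with the weak equivalence $A_w^*(p_*C)_j \to (p_*C)_j$ (pullback of $A_w$ along the fibration $(p_*C)_j \twoheadrightarrow A_j$) recovers $(p_*C)_w$, so a final 2-out-of-3 concludes.

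The main obstacle is recognizing that when $w$ is initial in $\partial(i/\McI)$ the relative matching map $\hM_ik$ collapses to $(k_i,C_w)$, which promotes an a priori weak equivalence to a trivial fibration and enables the short pullback argument in place of invoking the full inductive scheme of \Cref{lem:htpy-psfw-all-we}.
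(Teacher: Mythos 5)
Your proposal is correct and follows essentially the same route as the paper's own proof: delegate existence and Reedy fibrancy to \cref{prop:reedy-fib-psfw}, split on the two cases of the hypothesis, invoke \cref{lem:htpy-psfw-all-we} when all maps out of $i$ are weak equivalences, and in the initial-object case collapse the limits in \cref{lem:inv-psfw} so that $((p_*k)_i,(p_*C)_w)$ is a pullback of the trivial fibration $(p_i)_*(k_i,C_w)$, finishing by 2-out-of-3. Your write-up actually supplies more justification than the paper does for why $(k_i,C_w)$ is a trivial fibration (identifying it with $\hM_i k$ under the collapse) and why its pushforward and pullback remain weak equivalences via \cref{asm:pb-pi-well-behaved}.
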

\begin{proof}
  Fix a Reedy fibration between homotopical diagrams
  $p \colon B \twoheadrightarrow A \in \bC^\McI_\Msh$ and a Reedy fibrant
  homotopical object
  $k \colon C \twoheadrightarrow B \in \sfrac{\bC^\McI_\Msh}{B}$.
  Existence and fibrancy of the pushforward $(p_*C) \to A$ is given by
  \Cref{prop:reedy-fib-psfw}.
  It remains to check that $p_*C$ is homotopical.

  Let $w \colon i \to j \in \partial(i/\McI)$ be a non-identity weak equivalence.
  The first case where all objects and maps in $i/\McI$ are weak equivalence is
  covered by \Cref{lem:htpy-psfw-all-we}.
  In the second case where $w \in \partial(i/\McI)$ is initial,
  \Cref{lem:inv-psfw} shows that $(p_*C)_i \to A_f^*(p_*C)_j$ is a pullback of
  $(p_i)_*C_i \to (p_i)_*(k_i,C_w)$, which is a trivial fibration, and is
  therefore a weak equivalence.
  \begin{equation*}
    \begin{tikzcd}[cramped]
      {(p_*C)_i} && {(p_i)_*C_i} \\
      {A_w^*(p_*C)_j} & {A_w^*(p_j)_*C_j} & {(p_i)_*B_w^*C_j} \\
      {A_i} & {(p_*C)_j} \\
      & {A_j}
      \arrow["{\kappa_i}", from=1-1, to=1-3]
      \arrow["{((p_*k)_i, (p_*C)_f)}"', "\sim", dashed, two heads, from=1-1, to=2-1]
      \arrow["\lrcorner"{anchor=center, pos=0.05}, draw=none, from=1-1, to=2-2]
      \arrow["{(p_i)_*(k_i,C_f)}", "\sim"', two heads, from=1-3, to=2-3]
      \arrow["{A_f^*\kappa_j}", from=2-1, to=2-2]
      \arrow[two heads, from=2-1, to=3-1]
      \arrow["\sim", from=2-1, to=3-2]
      \arrow["{(B_w^*\ev)^\dagger}", from=2-2, to=2-3]
      \arrow["{A_f}"', "\sim", from=3-1, to=4-2]
      \arrow[two heads, from=3-2, to=4-2]
    \end{tikzcd}
  \end{equation*}
  Thus, by 2-out-of-3, $(p_*C)_w \colon (p_*C)_i \to (p_*C)_j$ is also a weak
  equivalence.
\end{proof}

We now observe a few examples and counter-examples covered by \Cref{thm:htpy-psfw}.
\begin{example}
  In addition to \Cref{thm:htpy-psfw} applying to the case where all maps in
  $\McI$ are weak equivalences, we can also take the following shapes for
  $(\McI,\McW_\McI)$.
  \begin{center}
    \begin{minipage}{0.45\linewidth}
      \begin{equation*}
        \begin{tikzcd}[cramped, row sep=small, column sep=small]
          & \bullet \\
          & \bullet \\
          & \bullet \\
          \bullet && \bullet \\
          & \bullet
          \arrow[from=1-2, to=2-2]
          \arrow["\sim", from=2-2, to=3-2]
          \arrow[from=3-2, to=4-1]
          \arrow[from=3-2, to=4-3]
          \arrow[from=4-1, to=5-2]
          \arrow[from=4-3, to=5-2]
        \end{tikzcd}
      \end{equation*}
    \end{minipage}
    \begin{minipage}{0.45\linewidth}
      \begin{equation*}
        \begin{tikzcd}[cramped, row sep=small, column sep=small]
          \bullet &&&& \bullet \\
          \bullet & \bullet & \bullet & \bullet & \bullet \\
          \bullet &&&& \bullet
          \arrow[from=2-2, to=1-1]
          \arrow[from=2-2, to=2-1]
          \arrow[from=2-2, to=3-1]
          \arrow[from=2-3, to=2-2]
          \arrow[from=2-3, to=2-4]
          \arrow["\sim", from=2-4, to=1-5]
          \arrow["\sim"{description}, from=2-4, to=2-5]
          \arrow["\sim"', from=2-4, to=3-5]
        \end{tikzcd}
      \end{equation*}
    \end{minipage}
  \end{center}
  \begin{center}
    \begin{minipage}{0.45\linewidth}
      \begin{equation*}
        \begin{tikzcd}[cramped, row sep=small, column sep=small]
          \bullet & \bullet & \bullet & \bullet & \bullet
          \arrow["\sim"', from=1-2, to=1-1]
          \arrow[from=1-3, to=1-2]
          \arrow[from=1-3, to=1-4]
          \arrow[from=1-4, to=1-5]
        \end{tikzcd}
      \end{equation*}
    \end{minipage}
    \begin{minipage}{0.45\linewidth}
      \begin{equation*}
        \begin{tikzcd}[cramped, row sep=small, column sep=small]
          && \bullet \\
          & \bullet && \bullet \\
          && \bullet \\
          \bullet && \bullet && \bullet
          \arrow[from=2-2, to=1-3]
          \arrow[from=2-2, to=4-1]
          \arrow[from=2-4, to=1-3]
          \arrow[from=2-4, to=4-5]
          \arrow[from=3-3, to=2-2]
          \arrow[from=3-3, to=2-4]
          \arrow[from=3-3, to=4-3]
          \arrow["\sim", from=4-3, to=4-1]
          \arrow["\sim"', from=4-3, to=4-5]
        \end{tikzcd}
      \end{equation*}
    \end{minipage}
  \end{center}
\end{example}
\begin{example}\label{ex:no-htpy-psfw}
  Unfortunately, \Cref{thm:htpy-psfw} does not apply to the following shapes of
  $(\McI,\McW_\McI)$.
  \begin{center}
    \begin{minipage}{0.45\linewidth}
      \begin{equation*}
        \begin{tikzcd}[cramped, row sep=small, column sep=small]
          0 & 01 & 1
          \arrow["\sim"', from=1-2, to=1-1]
          \arrow[from=1-2, to=1-3]
        \end{tikzcd}
      \end{equation*}
    \end{minipage}
    \begin{minipage}{0.45\linewidth}
      \begin{equation*}
        \begin{tikzcd}[cramped, row sep=small, column sep=small]
          0 & 1 & 2
          \arrow[from=1-1, to=1-2]
          \arrow["\sim", curve={height=-12pt}, from=1-1, to=1-3]
          \arrow[from=1-2, to=1-3]
        \end{tikzcd}
      \end{equation*}
    \end{minipage}
  \end{center}

  However, this is also somewhat expected.
  For instance, in the case of
  \begin{tikzcd}[cramped, row sep=small, column sep=small] 0 & 01 \ar[l,
    "\sim"'] \ar[r] & 1 \end{tikzcd}, we can take $\bC$ to be $\Set$ where
  the weak equivalences are the bijections and fibrations are all the maps.
  Then, given homotopical spans $A,B$, the exponential span $[A,B]$ has
  component at 0 as $\Set(A_0,B_0)$ and component at 01 as the set of pairs of
  commutative squares $(s_0,s_1)$ where
  $s_i \in \Set^\to(A_{01} \to A_i, B_{01} \to B_i)$ such that the 01-component
  of both squares agree.
  \begin{equation*}
    \begin{tikzcd}[cramped]
      {A_0} & {A_{01}} & {A_1} \\
      {B_0} & {B_{01}} & {B_1}
      \arrow[dashed, from=1-1, to=2-1]
      \arrow["\cong"', from=1-2, to=1-1]
      \arrow[from=1-2, to=1-3]
      \arrow[dashed, from=1-2, to=2-2]
      \arrow[dashed, from=1-3, to=2-3]
      \arrow["\cong", from=2-2, to=2-1]
      \arrow[from=2-2, to=2-3]
    \end{tikzcd}
  \end{equation*}
  Although every square $s_0 \in \Set^\to(A_{01} \to A_0, B_{01} \to B_0)$ is
  completely determined by its 0-component, it does not determine uniquely a
  square $s_1 \in \Set^\to(A_{01} \to A_1, B_{01} \to B_1)$.
  In particular, although the 01-component of the $s_1$-square is fixed by the
  $s_0$-square, the $s_0$-square has no influence over the 1-component of the
  $s_1$-square.
\end{example}

\printbibliography

\end{document}